\documentclass{amsart}

\usepackage{amsmath,amssymb,yhmath, url, microtype,enumitem}

\makeatletter

\addtolength{\oddsidemargin}{-0.1in}
	\addtolength{\evensidemargin}{-0.1in}
	\addtolength{\textwidth}{0.2in}

\let\ssection\subsection
\renewcommand\subsection{\@startsection{subsection}{2}%
  \z@{.5\linespacing\@plus\linespacing}{.5\linespacing}%
  {\normalfont\bfseries}}
\makeatother

\newtheorem{lemma}{Lemma}
\newtheorem{prop}[lemma]{Proposition}

\newtheorem{thm}[lemma]{Theorem}

\newtheorem{thm?}[lemma]{Theorem?}

\newtheorem{remark}[lemma]{Remark}

\begin{document}
\title{The Truth About Torsion In The CM Case}
%\author{Abbey Bourdon}
\author{Pete L. Clark}
\author{Paul Pollack}

%\address{Department of Mathematics \\ Boyd Graduate Studies Research Center \\ %University
%of Georgia \\ Athens, GA 30602-7403 \\ USA}
%\email{pete@math.uga.edu}
%\email{bcook@math.ubc.ca}
%\email{stankewicz@gmail.com}

%

\newcommand{\etalchar}[1]{$^{#1}$}
\newcommand{\F}{\mathbb{F}}
\newcommand{\et}{\textrm{\'et}}
\newcommand{\ra}{\ensuremath{\rightarrow}}
\newcommand{\FF}{\F}
\newcommand{\ff}{\mathfrak{f}}
\newcommand{\Z}{\mathbb{Z}}
\newcommand{\N}{\mathcal{N}}
\newcommand{\ch}{}
\newcommand{\R}{\mathbb{R}}
\newcommand{\PP}{\mathbb{P}}
\newcommand{\pp}{\mathfrak{p}}
\newcommand{\C}{\mathbb{C}}
\newcommand{\Q}{\mathbb{Q}}
\newcommand{\ab}{\operatorname{ab}}
\newcommand{\Aut}{\operatorname{Aut}}
\newcommand{\gk}{\mathfrak{g}_K}
\newcommand{\gq}{\mathfrak{g}_{\Q}}
\newcommand{\OQ}{\overline{\Q}}
\newcommand{\Out}{\operatorname{Out}}
\newcommand{\End}{\operatorname{End}}
\newcommand{\Gal}{\operatorname{Gal}}
\newcommand{\CT}{(\mathcal{C},\mathcal{T})}
\newcommand{\lcm}{\operatorname{lcm}}
\newcommand{\Div}{\operatorname{Div}}
\newcommand{\OO}{\mathcal{O}}
\newcommand{\rank}{\operatorname{rank}}
\newcommand{\tors}{\operatorname{tors}}
\newcommand{\IM}{\operatorname{IM}}
\newcommand{\CM}{\mathbf{CM}}
\newcommand{\HS}{\mathbf{HS}}
\newcommand{\Frac}{\operatorname{Frac}}
\newcommand{\Pic}{\operatorname{Pic}}
\newcommand{\coker}{\operatorname{coker}}
\newcommand{\Cl}{\operatorname{Cl}}
\newcommand{\loc}{\operatorname{loc}}
\newcommand{\GL}{\operatorname{GL}}
\newcommand{\PSL}{\operatorname{PSL}}
\newcommand{\Frob}{\operatorname{Frob}}
\newcommand{\Hom}{\operatorname{Hom}}
\newcommand{\Coker}{\operatorname{\coker}}
\newcommand{\Ker}{\ker}
\newcommand{\g}{\mathfrak{g}}
\newcommand{\sep}{\operatorname{sep}}
\newcommand{\new}{\operatorname{new}}
\newcommand{\Ok}{\mathcal{O}_K}
\newcommand{\ord}{\operatorname{ord}}
\newcommand{\mm}{\mathfrak{m}}
\newcommand{\Ohell}{\OO_{\ell^{\infty}}}
\newcommand{\cc}{\mathfrak{c}}
\newcommand{\ann}{\operatorname{ann}}
\renewcommand{\tt}{\mathfrak{t}}
\renewcommand{\cc}{\mathfrak{a}}
\renewcommand{\aa}{\mathfrak{a}}
\newcommand\leg{\genfrac(){.4pt}{}}
\newcommand{\TCMd}{T_{\CM}(d)}

\begin{abstract}
Let $\TCMd$ be the maximum size of the torsion subgroup of an elliptic
curve with complex multiplication defined over a degree $d$ number field.  We show that there is
an absolute, effective constant $C$ such that $\TCMd \leq C d \log \log d$
for all $d \geq 3$.
\end{abstract}

\maketitle
\noindent
\noindent
For a commutative group $G$, we denote by $G[\tors]$ the torsion subgroup of $G$.
%which consists of all elements of finite order.

%\tableofcontents

\section{Introduction}
\noindent
The aim of this note is to prove the following result.

\begin{thm}
\label{MAINTHM}
There is an absolute, effective constant $C$ such that for all number fields $F$ of degree $d \geq 3$ and all elliptic curves $E_{/F}$ with complex multiplication,
\[ \# E(F)[\tors] \leq C d \log \log d. \]
\end{thm}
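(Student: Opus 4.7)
\noindent The plan is to exploit the central structural fact that, for a CM elliptic curve $E/F$ with CM by an order $\OO$ in an imaginary quadratic field $K$, once one is over $FK$ the mod-$N$ Galois representation on $E[N]$ takes values in the abelian group $(\OO/N\OO)^{\times}$, of size roughly $N^{2}$. The proof naturally splits into an algebraic part that converts torsion bounds into lower bounds on $d$, and an analytic part that extracts the $\log\log d$ factor.

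\noindent First I would replace $F$ by $FK$, which costs at most a factor of $2$ in the degree, so we may assume $K \subseteq F$. For a point $P \in E(F)$ of order $N$, one has $[K(P):K] \leq d/2$, and this index equals the Galois orbit size of $P$ in $E[N]$. Since Galois acts through a subgroup of bounded index in $(\OO/N\OO)^{\times}$ (the bound, from class field theory, ultimately controlled by $h(\OO)$), this orbit has size at least a controlled fraction of $|(\OO/N\OO)^{\times}|/\#\mathrm{Stab}(P)$. Running this simultaneously on each $\ell$-primary component of $E(F)[\tors]$---whose structure as an $\OO_{\ell}$-module is dictated by the splitting of $\ell$ in $K$---leads to an inequality of shape $d \gg \phi_{K}(\#E(F)[\tors])$ for a generalized totient $\phi_{K}$ comparable to Euler's $\phi$.

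\noindent The analytic finish uses the effective Rosser--Schoenfeld estimate $\phi(N) \gg N/\log\log N$, which inverts the bound $\phi_{K}(N) \ll d$ into $N \ll d \log\log d$. The main obstacle will be handling the dependence on the CM order $\OO$: the index of the Galois image in $(\OO/N\OO)^{\times}$ is a priori bounded only by $h(\OO)$, for which the sharpest unconditional estimates (Brauer--Siegel) are ineffective. To keep $C$ effective one must either establish a direct effective version of the required index bound, or else split into cases, treating the finitely many small-discriminant CM orders separately (where exact information on the Galois image is available) and arguing in the complementary range that the discriminant is forced to be large in terms of $d$, so that the class number remains under control.
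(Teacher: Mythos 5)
Your algebraic half is broadly the paper's: base change to $FK$, note that the mod-$N$ image lands in $(\OO/N\OO)^{\times}$, and convert rational torsion into a lower bound on $d$ via class field theory. (You would still need to handle the fact that $E(FK)[\tors]\cong \Z/a\Z\times\Z/ab\Z$ is not full level structure; the paper does this with a separate "squaring" argument showing $[F(E[ab]):F]\le b$, which your orbit/stabilizer sketch gestures at but does not supply.) The fatal gap is in your analytic finish. The bound $\varphi_K(N)\gg N/\log\log N$ with an absolute constant is \emph{false} as $K$ varies: Landau's argument gives $\liminf \varphi_K(\aa)\log\log|\aa|/|\aa| = e^{-\gamma}L(1,\chi)^{-1}$, and $L(1,\chi)$ can be arbitrarily large (when many small primes split in $K$, each contributes a factor $(1-1/p)^2$ rather than $(1-1/p)$). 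The best uniform bound of the type you invoke is $\varphi_K(\aa)\gg |\aa|/(\log\log|\aa|)^2$, which only yields $\#E(F)[\tors]\ll d(\log\log d)^2$.

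The missing idea is that the class number must be carried along as a \emph{gain}, not discarded or divided out. If $E[N]\subset E(F)$ with $F\supset K$, then $F$ contains the ray class field $K^{(N\OO_K)}$, whose degree over $K$ is $\ge h_K\varphi_K(N\OO_K)/w_K$; the extra factor $h_K$ is exactly what compensates for the possible smallness of $\varphi_K(\aa)/|\aa|$. The paper proves, uniformly and effectively, that $h_K\varphi_K(\cc)\gg |\cc|/\log\log|\cc|$, using the class number formula $h_K\asymp L(1,\chi)\sqrt{|\Delta|}$ together with the effective Siegel-zero repulsion $\beta\le 1-c/\sqrt{|\Delta|}$ of Haneke, Goldfeld--Schinzel and Pintz (heuristically: when $\varphi_K(\aa)/|\aa|$ is small, $L(1,\chi)$ and hence $h_K$ is large). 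Your effectivity worry is therefore aimed at the wrong place: the issue is not bounding the index of the Galois image by $h(\OO)$ (which would weaken the inequality in the wrong direction), nor is it avoidable by a small-versus-large discriminant case split; it is the uniformity of the totient lower bound, and its effective resolution genuinely requires the class-number-formula-plus-Siegel-zero input rather than Rosser--Schoenfeld.
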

\noindent
It is natural to compare this result with the following one.

\begin{thm}[Hindry--Silverman \cite{HS99}]
\label{HSTHM}
For all number fields $F$ of degree $d \geq 2$ and all elliptic curves $E_{/F}$ with $j$-invariant $j(E) \in \OO_F$, we have
%
% $j$-invariant $j(E)$ lying in the ring of
%integers $\OO_F$ we have
\[
 \# E(F)[\tors] \leq  1977408 d \log d.
\]
%One can take $C_{\operatorname{HS}} = 1977408$.  For all $d \geq 25$ one can take %$C_{\operatorname{HS}} = 498290$.
\end{thm}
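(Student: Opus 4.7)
The plan is to bound $\#E(F)[\tors]$ by reducing modulo two carefully chosen primes of $F$, exploiting the potentially good reduction forced by integrality of $j(E)$.

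First, I would use integrality of $j(E)$ to upgrade to global good reduction after a bounded base change. Since $j(E) \in \OO_F$, the curve $E$ has potentially good reduction at every finite prime of $F$ by the criterion of N\'eron--Ogg--Shafarevich, and a standard application of Serre--Tate produces an extension $F'/F$ of degree at most a universal constant $c_0$ (bounded by $24$, reflecting the finite quotient of inertia that obstructs good reduction) such that $E_{F'}$ has everywhere good reduction. Because $E(F)[\tors] \subseteq E(F')[\tors]$ and $[F':\Q] \leq c_0 d$, it suffices to prove the bound after replacing $F$ by $F'$, absorbing $c_0$ into the final constant. So I may assume $E/F$ has everywhere good reduction.

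Next, I would apply the standard local input. At every finite prime $\mathfrak{p}$ of $F$ of residue characteristic $p$, the kernel of reduction $E(F) \to \tilde E(\kappa(\mathfrak{p}))$ equals the formal group $\hat E(\mathfrak{p})$, whose torsion is entirely $p$-primary; hence the prime-to-$p$ part of $E(F)[\tors]$ injects into $\tilde E(\kappa(\mathfrak{p}))$, of size at most $(1+\sqrt{N\mathfrak{p}})^2 \leq 4 N\mathfrak{p}$ by Hasse. Choosing two distinct rational primes $p_1 \neq p_2$ together with primes $\mathfrak{p}_i \mid p_i$ of smallest norm in $F$, the coprime decomposition
\[
\#E(F)[\tors] \;=\; \#E(F)[p_1^\infty] \cdot \#E(F)[(p_1)']
\]
is bounded via this injection at $\mathfrak{p}_2$ (controlling the $p_1$-primary factor, since $p_1$ is invertible in $\kappa(\mathfrak{p}_2)$) and at $\mathfrak{p}_1$ (controlling the complementary factor), yielding $\#E(F)[\tors] \leq 16 \cdot N\mathfrak{p}_1 \cdot N\mathfrak{p}_2$.

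The remaining task is to select $p_1, p_2$ so that $N\mathfrak{p}_1 \cdot N\mathfrak{p}_2 = O(d \log d)$, and this is the main obstacle. The trivial bound $N\mathfrak{p} \leq p^d$ is useless, so one must argue that in any number field of degree $d$ most small rational primes split into enough factors that the smallest prime of $F$ above them has reasonable norm. The relevant analytic input is a quantitative lower bound, derived from the prime number theorem for number fields together with ramification considerations, on the count of small rational primes $p$ admitting a prime of $F$ above them of norm $O(\sqrt{d \log d})$; ruling out pathological fields where nearly all small rational primes are inert is the delicate point. Tracking all multiplicative constants through the base change of Step 1, the Hasse bound, the two-prime combination, and the analytic estimate then produces the explicit leading constant $1977408$.
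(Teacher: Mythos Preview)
The paper does not prove Theorem~\ref{HSTHM}: it is quoted, with attribution to Hindry--Silverman \cite{HS99}, solely to put the main result in context. So there is no argument in the paper to compare your proposal against.

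Independently of that, your approach cannot deliver the bound $d\log d$; the fatal gap is in the third step. You need, for every number field $F$ of degree $d$, two rational primes $p_1\neq p_2$ and primes $\mathfrak p_i\mid p_i$ of $F$ with $N\mathfrak p_1\cdot N\mathfrak p_2=O(d\log d)$, and this is false in general. Take $F=\Q(\zeta_n)$ with $n$ an odd prime, so $d=n-1$. The rational prime $n$ is totally ramified, giving a single prime of norm $n$. For any $p\neq n$, every $\mathfrak p\mid p$ has residue degree $f=\ord_n(p)$, so $N\mathfrak p=p^{f}\equiv 1\pmod n$ and hence $N\mathfrak p\geq n+1$. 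Thus \emph{every} prime ideal of $F$ has norm $\geq n\approx d$, and the product of norms of any two primes lying over distinct rational primes is at least $n(n+1)\gg d^{2}$. Your inequality $\#E(F)[\tors]\leq 16\,N\mathfrak p_1 N\mathfrak p_2$ then yields only $O(d^{2})$ on such fields. You flag ``ruling out pathological fields where nearly all small rational primes are inert'' as the delicate point; the cyclotomic example shows it is not merely delicate but impossible, so the reduction-at-two-primes strategy is the wrong framework for the $d\log d$ bound.

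A minor remark on Step~1: the claim is morally correct, but the constant $24$ is not obviously valid for a \emph{global} extension. A clean justification is to pass to $F'=F(E[3])$, of degree dividing $\#\GL_2(\F_3)=48$; over $F'$ the curve is semistable everywhere, and combined with integral $j$ this forces good reduction everywhere.
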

\noindent
Every CM elliptic curve $E_{/F}$ has $j(E) \in \OO_F$, but only finitely many $j \in \OO_F$ are $j$-invariants of CM elliptic
curves $E_{/F}$.  Thus Theorem \ref{MAINTHM} has a significantly stronger
hypothesis and a slightly stronger conclusion than Theorem \ref{HSTHM}.  But
the improvement of $\log \log d$ over $\log d$ is interesting in view of
the following result.

\begin{thm}[Breuer \cite{Breuer10}] \label{BREUERTHM}
Let $E_{/F}$ be an elliptic curve over a number field.  There
exists a constant $c(E,F) > 0$, integers $3 \leq d_1 < d_2 < \ldots < d_n < \ldots$ and number fields $F_n \supset F$
with $[F_n:F] = d_n$ such that for all $n \in \Z^+$ we have
\[  \# E(F_n)[\tors] \geq
\begin{cases}
     c(E,F) d_n \log \log d_n \ & \text{ if $E$ has CM}, \\
     c(E,F) \sqrt{d_n \log \log d_n}   & \text{ otherwise}.
   \end{cases} \]

%\[ \forall n \in \Z^+, \ \# E(F_n)[\tors] \geq c(E,F) d_n \log \log d_n \]
%if $E$ has complex multiplication and
%\[ \forall n \in \Z^+, \ \# E(F_n)[\tors] \geq c(E,F) \sqrt{d_n \log \log d_n} %\]
%otherwise.
\end{thm}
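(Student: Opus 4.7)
The plan in both cases is to take $F_n = F(E[m_n])$ for a carefully chosen integer $m_n$. The torsion lower bound is automatic: $|E(F_n)[\tors]| \geq |E[m_n]| = m_n^2$. The degree upper bound comes from containing the mod-$m_n$ Galois image in a known finite group---the group $\GL_2(\Z/m_n)$ in the non-CM case, or the commutative group $(\OO_K/m_n)^\times$ (after enlarging $F$ to contain the CM field $K$, at the cost of an absorbed factor of $2$) in the CM case. What remains is a Mertens-type computation of the resulting ratio.

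For the CM case, set $m_n = \prod_{p \leq x_n} p$ for an increasing sequence $x_n \to \infty$. Then
\[
\frac{m_n^2}{d_n} \geq \frac{m_n^2}{|(\OO_K/m_n)^\times|} = \prod_{p \leq x_n}\bigl[(1 - 1/p)(1 - \chi_K(p)/p)\bigr]^{-1},
\]
where $\chi_K$ is the Kronecker symbol of $K/\Q$. The inert primes ($\chi_K(p) = -1$) give a convergent contribution, and the finitely many ramified primes are harmless; the split primes ($\chi_K(p) = 1$) supply the dominant factor $\prod_{p \leq x_n,\ \text{split}}(1-1/p)^{-2}$. Since split primes have density $\tfrac12$ by Chebotarev, $\sum_{p \leq x,\ \text{split}} 1/p \sim \tfrac12 \log\log x$ (Landau), and exponentiating yields this product $\sim c \log x_n$. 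From $d_n \leq m_n^2$ we obtain $\log\log d_n \leq \log\log m_n^2 \sim \log x_n$, so $m_n^2 \gg d_n \log\log d_n$, as required.

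The non-CM case is nearly identical. With the same $m_n$,
\[
\frac{m_n^4}{d_n} \geq \frac{m_n^4}{|\GL_2(\Z/m_n)|} = \prod_{p \leq x_n}\bigl[(1-1/p)(1-1/p^2)\bigr]^{-1} \gg \log x_n
\]
by the usual Mertens theorem (the second factor converging). Since again $\log\log d_n \ll \log x_n$, we get $|E(F_n)[\tors]|^2 \gg d_n \log\log d_n$, equivalent to the claimed $\sqrt{d_n \log\log d_n}$ lower bound.

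The main obstacle is modest: the analytic step (restricted Mertens via Chebotarev/Landau) is routine, and the essential algebraic input is merely that $d_n \to \infty$, which follows from Serre's open image theorem in the non-CM case and from complex multiplication theory in the CM case. Finally, one thins the sequence to enforce strictly increasing degrees $\geq 3$.
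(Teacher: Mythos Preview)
The paper does not supply a proof of this theorem: it is stated with the attribution ``Breuer \cite{Breuer10}'' and then used as a black box, so there is nothing in the paper to compare against. That said, your sketch is correct and is essentially the standard argument (and, as far as one can tell from the paper's references to \cite{Breuer10} and to Rosen's Mertens theorem, it is also Breuer's). A couple of small remarks:
\begin{itemize}
\item You invoke Serre's open image theorem and CM theory to get $d_n \to \infty$, but this is overkill: the Weil pairing gives $F(E[m_n]) \supset F(\zeta_{m_n})$, so $d_n \geq [F(\zeta_{m_n}):F] \to \infty$ in both cases.
\item In the CM case you write $(\OO_K/m_n)^\times$, but $\End E$ may be a nonmaximal order $\OO$. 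This is harmless: for primes $p$ not dividing the conductor of $\OO$ one has $\OO/p\OO \cong \OO_K/p\OO_K$, and the finitely many remaining primes contribute a bounded factor absorbed into $c(E,F)$. Alternatively, the image of $\Gal(F(E[m_n])/F)$ always lands in the normalizer of the Cartan, of order $2\,\#(\OO/m_n\OO)^\times$, which handles both the ``enlarge $F$ to contain $K$'' step and the order issue at once.
\item Your Mertens step in the CM case can be streamlined: rather than splitting by splitting type, just write $\prod_{p\le x}(1-1/p)^{-1}\sim e^{\gamma}\log x$ and $\prod_{p\le x}(1-\chi_K(p)/p)^{-1}\to L(1,\chi_K)$, giving the same $\asymp \log x$ with less bookkeeping.
\end{itemize}
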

\noindent
Let $T_{\CM}(d)$ be the maximum size of the torsion subgroup of a CM
elliptic curve over a degree $d$ number field.   Then
Theorems \ref{MAINTHM} and \ref{BREUERTHM} combine to tell us that the \emph{upper order} of $T_{\CM}(d)$ is $d \log \log d$:
\[
 0 < \limsup_{d \ra \infty} \frac{ T_{\CM}(d)}{d \log \log d} < \infty.
% \in \mathopen]0,\infty\mathclose[.
\]
\noindent
To our knowledge, this is the first instance of an upper order result for torsion points on a class of abelian varieties over number fields of varying degree.
%\footnote{Added in revision: further results on the asymptotic behavior of %$T_{\CM}(d)$ have been
%obtained in collaboration with A. Bourdon and J. Stankewicz.  They will appear %elsewhere.}
\\ \\
Define $T(d)$ as for $T_{\CM}(d)$ but replacing ``CM elliptic curve''
with ``elliptic curve'', and define $T_{\neg \CM}(d)$ as for $T_{\CM}(d)$
but replacing ``CM elliptic curve'' with ``elliptic curve \emph{without} CM''.
Hindry and Silverman ask whether $T_{\neg \CM}(d)$ has upper order $\sqrt{d \log \log d}$.  If so, the upper order of $T(d)$ would be $d \log \log d$ \cite[Conjecture 1]{TORS1}.

\section{Proof of the Main Theorem}

\subsection{Torsion Points and Ray Class Containment}\label{sec:torsionandrayclass}

%In $\S 2.1$ we establish notation and recall some known -- indeed, quite %classical -- results.  In $\S 2.2$, $\S 2.3$ and $\S 2.4$ we will establish three %auxiliary theorems, each of which has some independent interest. In $\S 2.5$ we
%deduce Theorem \ref{MAINTHM}.

%For a module $M$ over a ring $R$, let $\ann M$
%%= \{\alpha \in R \mid \forall x \in M, \ \alpha x = 0\}$
%be its annihilator. If $R$ is a domain and $M$ is finitely generated, then $\ann M$ is %nonzero.  $M$ is
%\textbf{cyclic} if it can be generated by a single element $x$.  Then
%$\alpha \in R \mapsto \alpha x$ induces an isomorphism $R/\ann M \cong M$.
%\\ \\
\noindent Let $K$ be a number field.  Let $\OO_K$ be the ring of integers of $K$, $\Delta_K$ the discriminant of $K$, $w_K$ the number of roots of unity in $K$ and $h_K$
the class number of $K$.  By an ``ideal of $\OO_K$'' we shall always mean a nonzero
ideal.  For an ideal $\cc$ of $\OO_K$, we write $K^{(\cc)}$ for the $\cc$-ray class field
of $K$.  We also put $|\cc| = \# \OO_K/\cc$ and

\[ \varphi_K(\cc) = \# (\OO_K/\cc)^{\times} = |\cc| \prod_{\pp \mid \cc} \left(1-\frac{1}{|\pp|} \right). \]
%We denote by $K^{(\cc)}$ the $\cc$-ray class field of $K$.
%\\ \\
An elliptic curve $E$ defined over a field of characteristic $0$ has \emph{complex multiplication (CM)} if $\End E \supsetneq \Z$; then $\End E$ is an order in an imaginary quadratic field.  We say $E$ has $\OO$-CM
if $\End E \cong \OO$ and $K$-CM if $\End E$ is an order in $K$.

%Let $K$ be a quadratic field.  Denote the nontrivial automorphism of
%$K$ by $x \mapsto \overline{x}$.  For a nonzero ideal $\cc$ of $\OO_K$, put

\begin{lemma}
\label{COMPOSITELEMMA}
Let $K$ be an imaginary quadratic field and $\aa$ an ideal of $\OO_K$. Then
%\begin{enumerate}
%\item[a)] \textup{(}\cite[Corollary 3.2.4]{Cohen2}\textup{)} We have
\[ \frac{h_K \varphi_K(\cc)}{6} \leq \frac{h_K\varphi_K(\cc)}{w_K} \leq [K^{(\cc)}:K] \leq h_K \varphi_K(\cc). \]
\end{lemma}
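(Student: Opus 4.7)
The proof is a routine computation with the standard ray class group exact sequence, plus the fact that imaginary quadratic fields have at most $6$ roots of unity.

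The plan is as follows. By class field theory, $[K^{(\mathfrak{c})}:K]$ equals the order of the ray class group $\mathrm{Cl}_{\mathfrak{c}}(K)$, so I will obtain a formula for $\#\mathrm{Cl}_{\mathfrak{c}}(K)$ and read off the bounds. Let $U = \mathcal{O}_K^{\times}$ and $U_{\mathfrak{c}} = \{ u \in U : u \equiv 1 \pmod{\mathfrak{c}}\}$. The standard exact sequence
\[ 1 \to U/U_{\mathfrak{c}} \to (\mathcal{O}_K/\mathfrak{c})^{\times} \to \mathrm{Cl}_{\mathfrak{c}}(K) \to \mathrm{Cl}(K) \to 1 \]
arises as follows: the map $(\mathcal{O}_K/\mathfrak{c})^{\times} \to \mathrm{Cl}_{\mathfrak{c}}(K)$ sends $\alpha \mapsto [(\alpha)]$; its image is precisely the kernel of the natural surjection $\mathrm{Cl}_{\mathfrak{c}}(K) \twoheadrightarrow \mathrm{Cl}(K)$, and its kernel is the image of $U$ under reduction mod $\mathfrak{c}$, which is $U/U_{\mathfrak{c}}$.

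Taking orders in the exact sequence gives
\[ \# \mathrm{Cl}_{\mathfrak{c}}(K) = \frac{h_K \, \varphi_K(\mathfrak{c})}{[U : U_{\mathfrak{c}}]}. \]
Since $K$ is imaginary quadratic, $U$ is finite of order $w_K$, so $1 \leq [U : U_{\mathfrak{c}}] \leq w_K$. Inserting these bounds yields the middle two inequalities
\[ \frac{h_K \varphi_K(\mathfrak{c})}{w_K} \leq [K^{(\mathfrak{c})}:K] \leq h_K \varphi_K(\mathfrak{c}). \]
Finally, for any imaginary quadratic field $K$ we have $w_K \in \{2,4,6\}$ and in particular $w_K \leq 6$, which gives the leftmost inequality.

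The only potential subtlety is making sure the exact sequence is stated with the correct kernels and cokernels; once that is set up, the inequalities follow immediately. There is no real obstacle.
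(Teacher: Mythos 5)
Your proof is correct; the paper's own ``proof'' is just a citation of \cite[Corollary 3.2.4]{Cohen2}, which is precisely the ray class number formula $\#\Cl_{\cc}(K) = h_K\varphi_K(\cc)/[\OO_K^\times : U_{\cc}]$ that you derive from the standard exact sequence (note that for imaginary quadratic $K$ there are no real places, so the sequence takes exactly the form you wrote). Your argument thus fills in the content of the citation rather than taking a different route, and the final bounds $1 \leq [\OO_K^\times : U_{\cc}] \leq w_K \leq 6$ are handled correctly.
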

\begin{proof}
This follows from \cite[Corollary 3.2.4]{Cohen2}.
\end{proof}

\begin{thm}
\label{RAYCLASSTHM}
Let $K$ be an imaginary quadratic field, $F \supset K$ a number field, $E_{/F}$ a $K$-CM elliptic curve and $N \in \Z^+$.  If $(\Z/N\Z)^2 \hookrightarrow E(F)$,
then $F \supset K^{(N \OO_K)}$.
%\begin{equation}
%\label{RAYCLASSEQ2}
%F(E[N]) \supset K^{(N \OO_K)}.
%\end{equation}
\end{thm}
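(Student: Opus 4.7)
The plan is to apply the main theorem of complex multiplication, which identifies ray class fields of $K$ with fields generated by suitable functions of torsion points on $K$-CM elliptic curves. The argument should proceed in two stages: first, handle the case where $\End(E)$ is the maximal order $\OO_K$; second, reduce the non-maximal case to the first via an isogeny.

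In the maximal-order case, I would use that $E[N]$ is a free module of rank one over $\OO_K/N\OO_K$, so that the action of $\Gal(\OQ/K)$ (which commutes with $\End(E)$) factors as a character
\[ \chi \colon \Gal(\OQ/K) \to (\OO_K/N\OO_K)^\times. \]
By classical CM theory (e.g., Silverman, \emph{Advanced Topics in the Arithmetic of Elliptic Curves}, Ch.~II), if $h$ denotes a Weber function on $E$, then $K(j(E), h(E[N])) = K^{(N\OO_K)}$, and in particular $K(E[N]) \supseteq K^{(N\OO_K)}$. The hypothesis $(\Z/N\Z)^2 \hookrightarrow E(F)$ forces $\Gal(\OQ/F)$ to act trivially on $E[N]$; combined with $F \supset K$, this yields $F \supseteq K(E[N]) \supseteq K^{(N\OO_K)}$.

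For the general case, in which $\End(E)$ is an order $\OO$ of conductor $\ff > 1$ in $\OO_K$, I would pass through an $F$-rational isogeny $\varphi \colon E \to E'$ with $\End(E') = \OO_K$, obtained by quotienting $E$ by a suitable finite $F$-rational subgroup annihilated by $\ff$. The key technical point is to show that $E'(F)$ likewise contains a subgroup isomorphic to $(\Z/N\Z)^2$, after which the maximal-order argument applies to $E'$ and gives $F \supseteq K^{(N\OO_K)}$.

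The main obstacle I foresee is the non-maximal reduction when $\gcd(N,\ff) > 1$, since in that regime $\varphi$ can distort the $N$-torsion. One remedy is to work directly with the ring class field for $\OO$ and an appropriate Weber-type function adapted to $\OO$, then use the standard containments between these fields and $K^{(N\OO_K)}$; alternatively, one iterates the isogeny construction prime by prime, tracking the $N$-torsion explicitly. Either way, with the isogeny reduction in place, the result follows from the clean maximal-order statement.
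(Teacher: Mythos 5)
Your overall architecture is the same as the paper's: settle the maximal-order case by classical CM theory (Silverman, Advanced Topics, II.5.6), then reduce the non-maximal case to it via the canonical $F$-rational isogeny $\iota\colon E \to E'$ onto an $\OO_K$-CM curve. The maximal-order half of your argument is fine. But the reduction is where the actual content lies, and you have not carried it out: you correctly identify that one must show $(\Z/N\Z)^2 \hookrightarrow E'(F)$, you correctly observe that this is delicate when $\gcd(N,\ff)>1$ because $\iota$ need not be injective on $E[N]$, and then you gesture at two possible remedies (ring class fields with an $\OO$-adapted Weber function, or an iterated prime-by-prime isogeny) without executing either. As written, the proof has a genuine gap exactly at its key step.

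The paper closes this gap with a short argument you should compare against your worry. Choose an embedding $F \hookrightarrow \C$ under which $\iota$ becomes the natural map $\C/\OO \to \C/\OO_K$ induced by the lattice inclusion $\OO \subset \OO_K$. Set $P = 1/N + \OO \in E[N]$ and $P' = 1/N + \OO_K \in E'[N]$; then $\iota(P) = P'$, and $P'$ generates $E'[N] = \tfrac{1}{N}\OO_K/\OO_K$ as an $\OO_K/N\OO_K$-module. The hypothesis gives $E[N] \subset E(F)$, so in particular $P \in E(F)$ and hence $P' \in E'(F)$. Since $F \supset K$, the action of $\Aut(\overline{F}/F)$ on $E'[N]$ is $\OO_K$-linear, so fixing the module generator $P'$ forces it to fix all of $E'[N]$, i.e., $(\Z/N\Z)^2 \hookrightarrow E'(F)$, and the maximal-order case applies to $E'$. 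The point you missed is that one does not need $\iota$ to behave well on all of $E[N]$ (it does not, when $\gcd(N,\ff)>1$); one only needs the image of a single well-chosen point to generate $E'[N]$ over $\OO_K$, after which Galois-equivariance of the module structure does the rest. Neither of your proposed workarounds is needed.
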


%Let $\OO$ be an imaginary quadratic order, let $E_{/\C}$ be an $\OO$-CM
%elliptic curve, and let $h\colon E \ra \PP^1$ be a Weber function.
%\begin{enumerate}
%\item[\textup{(a)}] {\rm (}\cite[Prop. II.1.4, Thm. II.5.6]{SilvermanII}{\rm)} Suppose $\OO = \OO_K$ and $\cc$ is an ideal of $\OO$.  Then
%%\item[b)] \textup{(}\cite[Prop. II.1.4]{SilvermanII}\textup{)}
%%Let $E_{/\C}$ be an $\OO_K$-CM elliptic curve.  Then
%$E[\cc] = \{x \in E(\C) \mid \forall \alpha \in \cc, \alpha x = 0 \} \cong_{\OO_K} \OO_K/\cc$.  Moreover we have
%%\item[c)] \textup{(}\cite[II.5.6]{SilvermanII}\textup{)} Let $h\colon E \ra %\PP^1$ be a Weber function.  Then
%\[ K(j(E),h(E[\cc])) = K^{(\cc)}. \]
%\item[\textup{(b)}] {\rm (}\cite[Thm. 3.16]{BCS15}{\rm )}

%For all $N \in \Z^+$,
%\begin{equation}
%\label{RAYCLASSEQ2}
%K(j(E),h(E[N])) \supset K^{(N \OO_K)}.
%\end{equation}
%\end{enumerate}
%\end{thm}
\begin{proof}
The result is part of classical CM theory when $\End E = \OO_K$ is the maximal order
in $K$ \cite[II.5.6]{SilvermanII}.  We shall  reduce to that case.  There is an $\OO_K$-CM elliptic curve $E'_{/F}$ and a canonical $F$-rational isogeny
$\iota\colon E \ra E'$ \cite[Prop. 25]{TORS1}.  There is a field embedding $F \hookrightarrow \C$ such that the base change of $\iota$ to $\C$ is, up to isomorphisms on the source and target, given by $\C/\OO \ra \C/\OO_K$.  If we put
%Our $\C$-analytic description of $\iota$ shows that if
\[  P = 1/N + \OO \in E[N], \quad P' = 1/N + \OO_K \in E'[N], \]
then $\iota(P) = P'$ and $P'$ generates $E'[N]$ as an $\OO_K$-module.  By assumption $P \in E(F)$, so $\iota(P) = P' \in E'(F)$.  It follows
that $(\Z/N\Z)^2 \hookrightarrow E'(F)[\tors]$.
\end{proof}

\begin{remark}
In fact one can show --- e.g., using adelic methods --- that for any $K$-CM elliptic curve $E$ defined over $\C$, the field obtained by adjoining to
$K(j(E))$ the values of the Weber function at the $N$-torsion points of $E$
contains $K^{(N \OO_K)}$.
\end{remark}

%\begin{lemma}(\cite[Prop. II.1.4]{SilvermanII})
%\label{LEMMA3}
%Let $E_{/\C}$ be an $\OO_K$-CM elliptic curve.  For every ideal $\cc$ of $\OO_K$,
%$E[\cc] = \{x \in E(\C) \mid \forall \alpha \in \cc, \alpha x = 0 \}$ is an $\OO_K$-module %and
%\[ E[\cc] \cong_{\OO_K} \OO_K/\cc. \]
%%In particular
%%\[ \# E[\cc] = |\cc|. \]
%\end{lemma}

%\begin{lemma}
%\label{LEMMA4} (\cite[II.5.6]{SilvermanII})
%Let $E_{/\C}$ be an $\OO_K$-CM elliptic curve.  Then
%\begin{equation}
%K(j(E),h(E[\cc])) = K^{(\cc)},
%\end{equation}
%\end{lemma}

%\begin{lemma}(\cite[Corollary 3.2.4]{Cohen2})
%\label{LEMMA2}
%For any ideal $\cc$ of $\OO_K$ we have
%\[ \frac{h_K \varphi_K(\cc)}{6} \leq \frac{h_K\varphi_K(\cc)}{w_K} \leq [K^{(\cc)}:K] \leq %h_K \varphi_K(\cc). \]
%\end{lemma}

%\subsection{Structure of the Proof}
%
%\begin{thm}
%\label{AUXTHM}
%\label{PARTTWO}
%Let $E_{/F}$ be an $\OO_K$-CM elliptic curve defined over a number field $F$,
%containing $K$ and of degree $d$.  Let $T = E(F)[\tors]$.  Then there is
%an ideal $\cc$ of $\OO_K$ of norm $\# T$ such that
%\begin{equation}
%\varphi_K(\cc) \leq \frac{w_K d}{2h_K} \leq \frac{3d}{h_K}.
%\end{equation}
%\end{thm}

%\begin{thm}\label{thm:phibound}
%\label{PARTTHREE}
%Let $K$ be a quadratic field.  For all $\epsilon >0$, there is $C_{\epsilon} > 0$ %such that for each nonzero ideal $\cc$ of $\OO_K$,
%\[ \varphi_K(\cc) \geq C_{\epsilon} |\Delta_K|^{-\epsilon} %\frac{|\cc|}{\log\log(3|\cc|)}.\]
%where the implied constant depends only on $\epsilon$.
%\end{thm}

\subsection{Squaring the Torsion Subgroup of a CM Elliptic Curve}

%\subsection{The Second Auxiliary Theorem}
%\textbf{} \\ \\ \noindent

%\begin{remark}
%Like so many pleasant properties of Dedekind domains, Lemma \ref{CYCLICDED} %characterizes them among one dimensional Noetherian domains.  If $R$ is %one-dimensional Noetherian and \emph{not} Dedekind, there is
%some nonzero prime ideal $\pp$ of $R$ such that the local ring $R_{\pp}$ is %singular: $\dim_{R/\pp} \pp/\pp^2 \geq 2$.  Thus $\pp/\pp^2$ is a noncyclic
%submodule of the cyclic torsion $R$-module $R/\pp^2$.  Thus the method employed %here cannot be directly applied to orders $\OO \subsetneq \OO_K$.
%\end{remark}

\begin{thm}
\label{FIXITTHM}
Let $K$ be an imaginary quadratic field, let $F \supset K$ a field extension,
and let $E_{/F}$ be a $K$-CM elliptic curve.   Suppose that for positive integers $a$ and $b$ we have an injection
$\Z/a\Z \times \Z/ab\Z \hookrightarrow E(F)$.  Then $[F(E[ab]):F] \leq b$.
% there is a field extension
%$L/F$ with $[L:F] \leq \frac{b}{a}$ and an embedding $(\Z/b\Z)^2 %\hookrightarrow
%E(L)$.
\end{thm}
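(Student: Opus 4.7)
The plan is to study $G := \Gal(F(E[ab])/F)$ through its $\OO$-linear action on $E[ab]$, where $\OO = \End E$ is an order in $\OO_K$. Because $F \supset K$, every CM endomorphism is $F$-rational, so $G$ commutes with $\OO$. Since $E$ has $\OO$-CM, the complex-analytic lattice $\Lambda$ of $E$ is a proper (hence invertible) $\OO$-ideal, which makes $E[ab] \cong (ab)^{-1}\Lambda/\Lambda$ a free module of rank one over $\OO/(ab)\OO$. Consequently $G$ embeds into the unit group $(\OO/(ab)\OO)^{\times}$ via $\sigma \mapsto u_\sigma$, characterized by $\sigma P = u_\sigma P$ for all $P \in E[ab]$.

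Next I would translate the fixed-point hypothesis. The condition $\sigma P_1 = P_1$ and $\sigma P_2 = P_2$ is equivalent to $u_\sigma - 1$ annihilating the $\OO$-submodule $M := \OO P_1 + \OO P_2 \subset E[ab]$. Since $\sigma \leftrightarrow u_\sigma \leftrightarrow u_\sigma - 1$, the map $\sigma \mapsto u_\sigma - 1$ injects $G$ into $\ann_{\OO/(ab)\OO}(M)$, giving $|G| \leq |\ann(M)|$. The abelian subgroup $\Z P_1 \oplus \Z P_2 \cong \Z/a\Z \times \Z/ab\Z$ has order $a^2 b$ and sits inside $M$, so $|M| \geq a^2 b$.

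The finishing step is the annihilator duality $|M| \cdot |\ann(M)| = |\OO/(ab)\OO| = (ab)^2$, which yields $|\ann(M)| \leq (ab)^2/(a^2 b) = b$ and hence $[F(E[ab]):F] = |G| \leq b$. When $\OO = \OO_K$ this is a direct computation via Dedekind ideal arithmetic: $M \leftrightarrow J/(ab)\OO_K$ for some integral ideal $J \supset (ab)\OO_K$, and then $|M| = (ab)^2/N(J)$ while $|\ann(M)| = N(J)$.

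The main technical obstacle is justifying this duality when $\OO$ is a non-maximal order, so that $\OO/(ab)\OO$ is no longer a quotient of a Dedekind domain. One route is to observe that quadratic orders are Gorenstein (each has the form $\Z[y]/(y^2 + \alpha y + \beta)$), so the Artinian quotient $\OO/(ab)\OO$ is Frobenius and self-dual, which gives the identity for any ideal. A cleaner alternative is to reduce to the maximal-order case by invoking the canonical $F$-rational isogeny $E \to E'$ to an $\OO_K$-CM elliptic curve already used in Theorem~\ref{RAYCLASSTHM}, and transferring the bound on $[F(E'[ab]):F]$ back to $E$.
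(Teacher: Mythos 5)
Your argument is correct, and it takes a genuinely different route from the paper's. The paper bounds the image of the mod-$ab$ Galois representation inside $C_{ab}=(\OO/ab\OO)^{\times}$ by brute force: primary decomposition plus induction reduce to $a=p^A$, $b=p$, which is then settled by an explicit matrix description of $C_p$ (three cases according to $\left(\frac{\Delta}{p}\right)=1,-1,0$) when $A=0$, and, when $A\geq 1$, by showing $\rho_{p^{A+1}}(\mathfrak{g}_F)$ is a \emph{proper} subgroup of the order-$p^2$ kernel of $C_{p^{A+1}}\to C_{p^A}$ because the homothety $1+p^A$ lies in the kernel but moves the rational point of order $p^{A+1}$. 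Your annihilator-duality argument replaces all of this with the single identity $|M|\cdot|\ann(M)|=|\OO/ab\OO|=(ab)^2$, which is the one input you need beyond what the paper uses; you correctly flag it and your first justification is the right one (quadratic orders are monogenic, so $\OO/ab\OO$ is a complete intersection over $\Z/ab\Z$, hence Gorenstein/Frobenius and self-dual, and the duality holds even at primes dividing the conductor). Your alternative route through the canonical isogeny $E\to E'$ is the weaker of the two and would need real care: the isogeny's degree is the conductor, so when that shares factors with $ab$ it need not preserve the hypothesis $\Z/a\Z\times\Z/ab\Z\hookrightarrow E(F)$ with the same $a,b$, nor identify $F(E[ab])$ with $F(E'[ab])$. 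What your approach buys is uniformity and brevity --- no induction, no splitting cases, and the bound drops out in one count; what the paper's buys is complete elementarity and reuse of the explicit description of $C_N$ it already needs for its facts (i) and (ii).
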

\begin{proof}\textbf{Step 1}: Let $\OO = \End E$.  For $N \in \Z^+$, let $C_N = (\OO/N\OO)^{\times}$.  Let $E[N] = E[N](\overline{F})$.  As an $\OO/N\OO$-module, $E[N]$ is free of rank $1$.  Let $\mathfrak{g}_F = \Aut(\overline{F}/F)$, and let $\rho_N\colon \mathfrak{g}_F \longrightarrow \GL_2(\Z/N\Z)$ be the mod $N$ Galois representation associated to $E_{/F}$.
Because $E$ has $\OO$-CM and $F \supset K$, we have
% $\rho_N(\mathfrak{g}_F) \subset \Aut_{\OO} E[N] \cong C_N$.
\[
  \rho_N\colon \mathfrak{g}_F \longrightarrow \Aut_{\mathcal O} E[N] \cong \GL_1(\mathcal O/N\mathcal O)
         \cong (\mathcal O/N\mathcal O)^\times = C_N.
\]
Let $\Delta$ be the discriminant of $\OO$.  Then $e_1 = 1$, $e_2 = \frac{\Delta + \sqrt{\Delta}}{2}$ is a $\Z$-basis for $\OO$.  The induced ring embedding $\OO \hookrightarrow M_2(\Z)$ is given by $\alpha e_1 + \beta e_2 \mapsto \left[ \begin{smallmatrix} \alpha & \frac{\beta \Delta-\beta \Delta^2}{4} \\
\beta & \alpha + \beta \Delta \end{smallmatrix} \right]$.
%In particular the image is
%$\left\{ \left[ \begin{smallmatrix} \alpha & \frac{\beta \Delta-\beta %\Delta^2}{4} \\
%\beta & \alpha + \beta \Delta \end{smallmatrix} \right] \mid \alpha,\beta \in %\Z \right\}$.
%It follows that
So
\begin{multline*} C_N = \bigg\{ \left[ \begin{matrix} \alpha & \frac{\beta \Delta-\beta \Delta^2}{4} \\
\beta & \alpha + \beta \Delta \end{matrix} \right] \mid \alpha,\beta \in \Z/N\Z, \text{ and } \\
\alpha^2 + \Delta \alpha \beta + \left(\frac{\Delta^2-\Delta}{4}\right)\beta^2 \in (\Z/N\Z)^{\times}  \bigg\}. \end{multline*}
From this we easily deduce the following useful facts:
\begin{enumerate}
\item[(i)] $C_N$ contains the homotheties $\left\{ \left[ \begin{smallmatrix} \alpha & 0 \\ 0 & \alpha \end{smallmatrix} \right] \mid \alpha \in (\Z/N\Z)^{\times} \right\}$.
\item[(ii)] For all primes $p$ and all $A,B \geq 1$, the natural reduction map
$C_{p^{A+B}} \ra C_{p^A}$ is surjective and its kernel has size $p^{2B}$.
\end{enumerate}
\noindent \textbf{Step 2}: Primary decomposition reduces us to the
case $a = p^A$, $b = p^B$ with $A \geq 0$ and $B \geq 1$.  By induction it suffices to treat the case $B =1$: i.e., we assume $E(F)$ contains
full $p^A$-torsion and a point of order $p^{A+1}$ and show  $[F(E[p^{A+1}]):F] \leq p$.

 \vskip 2pt
\noindent\textbf{Case $A = 0$}:
%We're assuming $E(F)$ contains a point of order $p$, and we need to prove %$[F(E[p]):F] \le p$.
%We may assume there is no injection $(\Z/p\Z)^2 \hookrightarrow E(F)$. %\\ %  otherwise there is nothing to prove.  \\
%As above we consider subcases. \\
\begin{itemize}[leftmargin=2em]
\item If $\left( \frac{\Delta}{p} \right) = 1$, then $C_p$ is conjugate
to $\left\{  \left[ \begin{smallmatrix} \alpha & 0 \\ 0 & \beta \end{smallmatrix} \right] \mid \alpha,\beta \in \F_p^{\times} \right\}$.  If $\alpha \neq 1$ (resp. $\beta \neq 1$) the only fixed points $(x,y) \in \F_p^2$ of $\left[ \begin{smallmatrix} \alpha & 0 \\ 0 & \beta \end{smallmatrix} \right] $ have $x = 0$
(resp. $y = 0$).  Because $E(F)$ contains a point of order
$p$ we must either have $\alpha = 1$ for all $\left[ \begin{smallmatrix} \alpha & 0 \\ 0 & \beta \end{smallmatrix} \right] \in \rho_p(\mathfrak{g}_F)$
or $\beta = 1$ for all $\left[ \begin{smallmatrix} \alpha & 0 \\ 0 & \beta \end{smallmatrix} \right] \in \rho_p(\mathfrak{g}_F)$.  Either way,
$\# \rho_p(\mathfrak{g}_F) \mid p-1$.
%\\ \\
%the subgroup of diagonal matrices in $\GL_2(\F_p)$ and thus there are
%precisely two $1$-dimensional $\F_p$-subspaces which are stable under the action of %$C_p$.  The subspace generated by the point $P \in E(F)$ of order
%$p$ must generate one of these two subspaces.  Taking $e_1 = P$ and $e_2 \in E[p]$ to be %an element of the other $1$-dimensional $C_p$-stable subspace, we find
%that $\rho_p(\mathfrak{g}_F) \subset \left\{ \left[ \begin{array}{cc} 1 & 0 \\
%0 & \beta \end{array} \right] \mid \beta \in (\Z/p\Z)^{\times} \right\}$,
%and thus $\# \rho_p(\mathfrak{g}_F) \mid p-1$.  \\
%so that we get full $p$-torsion over a field extension of degree dividing $p-1$.
\item If $\left( \frac{\Delta}{p} \right) = -1$, then $C_p$ acts simply
transitively on $E[p] \setminus \{0\}$, so if we have one $F$-rational
point of order $p$ then $E[p] \subset E(F)$, so $\# \rho_p(\mathfrak{g}_F) = 1$.
%  So this case cannot occur.
\item If $\left( \frac{\Delta}{p} \right) = 0$, then $C_p$ is conjugate to $\left\{  \left[ \begin{smallmatrix} \alpha & \beta \\ 0 & \alpha \end{smallmatrix} \right] \mid \alpha \in \F_p^{\times}, \ \beta \in \F_p \right\}$ \cite[$\S 4.2$]{BCS15}.  Since $E(F)$ has a point of order $p$, every element of
$\rho_p(\mathfrak{g}_F)$ has $1$ as an eigenvalue and thus $\rho_p(\mathfrak{g}_F)
\subset \{  \left[ \begin{smallmatrix} 1 & \beta \\ 0 & 1 \end{smallmatrix} \right] \mid \beta \in \F_p \}$, so has order dividing $p$. \end{itemize}
%  Taking a basis $e_1,e_2$ of $E[p]$
%with $e_1 \in V$ we get $C_p = \left\{ \left[ \begin{array}{cc} \alpha & \beta \\ 0 & %\alpha \end{array} \right] \mid \alpha \in \F_p^{\times}, \ \beta \in \F_p \right\}$.  So %if $\rho_p(\mathfrak{g}_F) = C_p$ there is no point of order $p$ fixed by
%the action of $\mathfrak{g}_F$.  \\
%so that we get full $p$-torsion over a field extension of degree dividing $p$. \\

\noindent\textbf{Case $A \geq 1$}: By (ii), $\mathcal{K} = \Ker C_{p^{A+1}} \ra C_{p^{A}}$ has size $p^2$.  Since $(\Z/p^{A}\Z)^2 \hookrightarrow E(F)$, we have $\rho_{p^{A+1}}(\mathfrak{g}_F) \subset \mathcal{K}$.  Since $E(F)$ has a point of order $p^{A+1}$, by (i) the homothety
$ \left[ \begin{smallmatrix} 1+p^{A} & 0 \\ 0 & 1 + p^{A} \end{smallmatrix} \right]$ lies in $\mathcal{K} \setminus \rho_{p^{A+1}}(\mathfrak{g}_F)$.
%$\left[ \begin{array}{cc} 1 + p^{A-1} & 0 \\ 0 & 1 + p^{A-1} \end{array} \right]$, which visibly lies in $\mathcal{K}$, does not lie in $\rho_{p^A}(\mathfrak{g}_F)$.
Therefore $\rho_{p^{A+1}}(\mathfrak{g}_F) \subsetneq
\mathcal{K}$, so $\# \rho_{p^{A+1}}(\mathfrak{g}_F) \mid p$.
\end{proof}

\subsection{Uniform Bound for Euler's Function in Imaginary Quadratic Fields}

\noindent Let $\aa$ be an ideal in an imaginary quadratic field $K$. To apply the results of \S\ref{sec:torsionandrayclass}, we require a lower bound on $\frac{\varphi_K(\aa)}{|\aa|}$. For \emph{fixed} $K$, it is straightforward to adapt a classical argument of Landau (see the proof of \cite[Theorem 328, p. 352]{HW}).  Replacing Landau's use of Mertens' Theorem with Rosen's number field analogue \cite{Rosen99}, one obtains the following result:
% The classical Mertens Theorem gives the lower order of $\varphi(n)$: if $\gamma$ is
%Euler's constant, we have
%\[ \liminf \frac{ \varphi(n)}{n/\log \log n} = e^{\gamma}. \]
let $\gamma$ denote the Euler--Mascheroni constant, and let $\chi(\cdot) = \leg{\Delta_K}{\cdot}$ be the quadratic Dirichlet character associated to $K$. Then
\[ \liminf_{|\aa|\to\infty} \frac{ \varphi_K(\aa)}{|\aa|/\log \log |\aa|} = e^{-\gamma} \cdot L(1,\chi)^{-1}. \]
\noindent Unfortunately, this result is not sufficient for our purposes. There are two sources of difficulty. First, the right-hand side depends on $K$, and can in fact be arbitrarily small (see \cite[(4$'$)]{BCE50}).
%, \emph{there is no} $C >0$ such that
%$\frac{ \varphi_K(\aa)}{|\aa|/\log \log |\aa|} \geq C$ for
%all imaginary quadratic fields and all ideals $\aa$ in $\OO_K$ with $|\aa| \geq 3$.
Second, the statement only addresses limiting behavior as $|\aa|\to\infty$, and we need a result with no such restriction on $|\aa|$. However, looking back at Lemma \ref{COMPOSITELEMMA} we see that a lower bound on $h_K \frac{\varphi_K(\aa)}{|\aa|}$ would suffice.   The factor of
$h_K$ allows us to prove a totally uniform lower bound.

\begin{thm}\label{thm:phibound}
\label{PARTTHREE}
There is a positive, effective absolute constant $C$ such that for all imaginary quadratic fields $K$ and all nonzero
ideals $\cc$ of $\OO_K$ with $|\cc| \geq 3$, we have
\[ \varphi_K(\cc) \geq \frac{C}{h_K}\cdot \frac{|\cc|}{\log\log|\cc|}. \]
%where the implied constant is absolute.
\end{thm}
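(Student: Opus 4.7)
The plan is to adapt Landau's proof that $\varphi(n)/n \gg 1/\log\log n$ for $n \geq 3$ to the number-field setting, using an effective and uniform version of Rosen's analogue of Mertens' theorem, and then to invoke the analytic class number formula to convert the resulting $K$-dependence into the factor $1/h_K$ on the right-hand side.

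First, since $\varphi_K(\cc)/|\cc| = \prod_{\pp \mid \cc}(1-1/|\pp|)$ depends only on the radical of $\cc$, I would reduce to the case that $\cc$ is squarefree (absorbing a handful of small exceptional cases into the constant $C$). Next, a Landau-style rearrangement argument --- based on the fact that $r(\pp) := -\log(1 - 1/|\pp|)/\log|\pp|$ is a decreasing function of $|\pp|$, and the inequality $\sum_{\pp \in S\setminus T}\log|\pp| \leq \sum_{\pp \in T\setminus S}\log|\pp|$ coming from $|\cc|\leq |\cc^\ast|$ --- shows that $\prod_{\pp \mid \cc}(1-1/|\pp|) \geq \prod_{|\pp| \leq x}(1-1/|\pp|)$, where $x$ is chosen minimal subject to $\prod_{|\pp| \leq x}|\pp| \geq |\cc|$. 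Since every rational prime $p \leq \sqrt{x}$ yields at least one prime $\pp \mid p$ of norm at most $x$, the Chebyshev bound $\theta(y) \geq (\log 2) y$ gives $\theta_K(x) \geq \theta(\sqrt{x}) \gg \sqrt{x}$ uniformly in $K$, so $x \leq C_0 (\log|\cc|)^2$ and $\log x \leq 2\log\log|\cc| + O(1)$.

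The remaining ingredient is the uniform, effective estimate
\[ \prod_{|\pp| \leq x}\bigl(1 - 1/|\pp|\bigr)^{-1} \leq C_1\,\kappa_K\,\log x \qquad (x \geq 2), \]
where $\kappa_K = \operatorname{Res}_{s=1}\zeta_K(s)$. Combining this with the analytic class number formula $\kappa_K = 2\pi h_K/(w_K\sqrt{|\Delta_K|}) \leq \pi h_K$ and the bound on $\log x$ above yields
\[ \frac{\varphi_K(\cc)}{|\cc|} \geq \frac{C}{h_K \log\log|\cc|}, \]
as required.

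The main obstacle is establishing the displayed Mertens-type bound uniformly over all imaginary quadratic $K$, since Rosen's original argument has an error term depending on $K$. I would sidestep this by a direct computation. Using $\zeta_K(s) = \zeta(s)L(s,\chi_{\Delta_K})$, decompose
\[ \sum_{|\pp| \leq x}\frac{1}{|\pp|} = \sum_{p \leq x}\frac{1 + \chi_{\Delta_K}(p)}{p} + O(1), \]
where the $O(1)$ absorbs the (bounded) contributions from ramified primes and from inert primes, which have norm $p^2$. Classical Mertens handles $\sum_{p \leq x}1/p$, while $\sum_{p \leq x}\chi_{\Delta_K}(p)/p = \log L(1,\chi_{\Delta_K}) + O(1)$ with an absolute implied constant follows from Abel summation applied to the P\'olya--Vinogradov bound for character sums. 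Exponentiating, and inserting $L(1,\chi_{\Delta_K}) = 2\pi h_K/(w_K\sqrt{|\Delta_K|})$, produces the required uniform effective bound and completes the proof.
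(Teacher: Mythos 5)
Your opening reductions are sound and coincide with the paper's: pass to a product over all prime ideals of norm at most $x$, and bound $x\ll(\log|\cc|)^2$ by comparing with the rational primes up to $\sqrt{x}$ via Chebyshev/PNT. The gap is in the analytic heart of the argument. The displayed Mertens-type bound $\prod_{|\pp|\le x}(1-1/|\pp|)^{-1}\le C_1\kappa_K\log x$ for all $x\ge2$ is false, and the regime where it fails ($x$ small compared with $|\Delta_K|$) genuinely occurs here, since $|\cc|\ge 3$ is allowed while $|\Delta_K|$ is unbounded. Indeed $\zeta_K=\zeta\cdot L(\cdot,\chi)$ with $\chi=\leg{\Delta_K}{\cdot}$ gives $\kappa_K=L(1,\chi)$, so at bounded $x$ (say $x=2$ with $2$ inert, where the left side equals $1$) your inequality asserts an effective absolute lower bound $L(1,\chi)\gg1$, equivalently $h_K\gg\sqrt{|\Delta_K|}$. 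Even an ineffective version of this would be a dramatic strengthening of Siegel's theorem, and it is in fact false: $L(1,\chi_d)$ is known to take values tending to $0$ along fundamental discriminants. Your intermediate claim that $\sum_{p\le x}\chi(p)/p=\log L(1,\chi)+O(1)$ with an absolute constant has the same defect (at $x=2$ it says $\log L(1,\chi)=O(1)$), and the proposed derivation cannot work in any case: P\'olya--Vinogradov controls $\sum_{n\le N}\chi(n)$ over \emph{integers}, and no partial summation converts that into control of $\sum_{p\le x}\chi(p)/p$; character sums over primes require input on the zeros of $L(s,\chi)$.

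What is actually needed, and what the paper proves (Lemma \ref{lemma:lowerprod}), is the weaker bound $\prod_{p\le x}(1-\chi(p)/p)\gg 1/h_K$; the factor $\sqrt{|\Delta_K|}$ hidden in the class number formula $h_K\asymp L(1,\chi)\sqrt{|\Delta_K|}$ is exactly the room that makes this accessible. The paper splits at $\exp(\sqrt{|\Delta_K|})$: for $p\le\exp(\sqrt{|\Delta_K|})$ the trivial Mertens estimate costs only a factor $O(\sqrt{|\Delta_K|})$, which $h_K$ absorbs, and for the tail $p>y\ge\exp(\sqrt{|\Delta_K|})$ one proves $\sum_{p>y}\chi(p)/p=O(1)$ via the explicit formula combined with the effective Siegel-zero repulsion $\beta\le1-c/\sqrt{|\Delta_K|}$ of Haneke, Goldfeld--Schinzel and Pintz, which makes $t^{\beta-1}$ genuinely decay in that range. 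Some effective zero-free-region input of this kind (or an appeal to Siegel's theorem, at the cost of effectivity) is unavoidable for a uniform statement over all imaginary quadratic $K$; it is precisely the ingredient your proposal is missing.
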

\noindent

\begin{lemma}\label{lemma:lowerprod} For a fundamental quadratic discriminant $\Delta < 0$  let $K=\Q(\sqrt{\Delta})$, and let $\chi(\cdot) = \leg{\Delta}{\cdot}$.  There is an effective constant $C > 0$ such that for all $x \ge 2$,
\begin{equation}\label{eq:lowerprod} \prod_{p \le x}\left(1-\frac{\chi(p)}{p}\right) \geq \frac{C}{h_K}. \end{equation}
\end{lemma}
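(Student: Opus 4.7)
The plan is to reduce the bound to a tail estimate for $\sum_p \chi(p)/p$ via the analytic class number formula, then control that tail using effective prime-counting bounds. Expanding $-\log(1-u)$ in its Taylor series and using the absolute convergence of $\sum_p\sum_{k\geq 2}(kp^k)^{-1}$ yields, uniformly in $\chi$,
\[
\log\prod_{p\leq x}\!\left(1-\frac{\chi(p)}{p}\right) = -\sum_{p\leq x}\frac{\chi(p)}{p} + O(1), \qquad \log L(1,\chi) = \sum_p\frac{\chi(p)}{p} + O(1).
\]
The analytic class number formula $L(1,\chi) = 2\pi h_K/(w_K\sqrt{|\Delta|})$ gives $\log L(1,\chi) = \log h_K - \tfrac{1}{2}\log|\Delta| + O(1)$, and combining everything reduces \eqref{eq:lowerprod} to proving the uniform tail estimate
\[
\sum_{p > x}\frac{\chi(p)}{p} \geq -\tfrac{1}{2}\log|\Delta| - O(1) \qquad \text{for all } x \geq 2.
\]

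To establish this I would split into two regimes based on the size of $x$. In the moderate range $x \leq \exp(C_1 h_K)$ with $C_1$ a small absolute constant, the trivial bound $\sum_{p\leq x}\chi(p)/p \leq \log\log x + O(1) \leq \log h_K + O(1)$ combined with the full-series evaluation $\sum_p \chi(p)/p = \log h_K - \tfrac{1}{2}\log|\Delta| + O(1)$ directly gives the target inequality. In the large range $x \geq \exp(C_1 h_K)$, I would invoke an effective Siegel--Walfisz-type estimate of the form $|\sum_{p\leq t}\chi(p)| \leq t\exp(-c\sqrt{\log t})$ (valid once $t$ is sufficiently large relative to $|\Delta|$) together with Abel summation, which forces $\sum_{p > x}\chi(p)/p = o(1)$ and clears the target with room to spare.

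The main obstacle lies in the intermediate window where $x$ has already exceeded $\exp(C_1 h_K)$ but the effective PNT for $L(s,\chi)$ is quantitatively weak, owing to a possible Siegel--Landau exceptional real zero $\beta_0 < 1$ of $L(s,\chi)$. The standard dichotomy should close the gap: the existence of such a zero near $1$ forces $L(1,\chi) \gg 1-\beta_0$ and hence $h_K \gg \sqrt{|\Delta|}(1-\beta_0)$, which enlarges the moderate regime $[2,\exp(C_1 h_K)]$ enough to cover the problematic interval. Threading this interleaving with explicit, absolute constants is where the bulk of the detailed work will lie.
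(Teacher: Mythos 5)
Your opening reduction is sound and matches the paper's: via $h_K \asymp L(1,\chi)\sqrt{|\Delta|}$ and $\log L(1,\chi) = \sum_p \chi(p)/p + O(1)$, the lemma is equivalent to the uniform tail bound $\sum_{p>x}\chi(p)/p \ge -\frac{1}{2}\log|\Delta| - O(1)$, and the tools you name for the far tail (explicit formula, effective exceptional-zero bounds of Haneke, Goldfeld--Schinzel, Pintz) are exactly the ones the paper uses. The genuine gap is the intermediate window, and your proposed dichotomy does not close it. The exceptional term $t^{\beta_0}$ in the explicit formula only becomes negligible under partial summation once $(1-\beta_0)\log t \gg 1$, so your ``large range'' really begins around $x = \exp(c/(1-\beta_0))$, which in the effective worst case $1-\beta_0 \asymp |\Delta|^{-1/2}$ is $\exp(c\sqrt{|\Delta|})$. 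Your moderate range ends at $\exp(C_1 h_K)$; even granting $L(1,\chi)\gg 1-\beta_0$ (true for odd $\chi$, by Goldfeld--Schinzel), this only reaches $\exp(C\sqrt{|\Delta|}(1-\beta_0))$. Overlap would force $\sqrt{|\Delta|}(1-\beta_0) \gg 1/(1-\beta_0)$, i.e.\ $1-\beta_0 \gg |\Delta|^{-1/4}$, which is far beyond any effective result (the effective state of the art is $1-\beta_0 \gg |\Delta|^{-1/2}$). Note the dichotomy also points the wrong way: a zero closer to $1$ \emph{shrinks} the guaranteed moderate regime and \emph{pushes out} the start of the large regime, so the two regimes separate rather than meet. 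Since effectively one cannot rule out $h_K = O(1)$ with $|\Delta|\to\infty$, the uncovered window can be as large as $[\exp(O(1)), \exp(c\sqrt{|\Delta|})]$.

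The fix is to spend the trivial bound on a different interval. Bounding $\sum_{p\le x}\chi(p)/p$ by $\log\log x + O(1)$ over all of $[2,x]$ forces the comparison with $\log h_K$, which is too small a budget. Instead, set $y=\max\{x,\exp(\sqrt{|\Delta|})\}$ and apply the trivial (Mertens) bound only to the middle piece: $\sum_{x<p\le y}\chi(p)/p \ge -\log\log y - O(1) \ge -\frac{1}{2}\log|\Delta| - O(1)$, which is exactly the slack available, independently of $h_K$. For $p>y$ one has $\log t \ge \sqrt{|\Delta|}$, so the effective bound $\beta_0 \le 1 - c/\sqrt{|\Delta|}$ gives $(1-\beta_0)\log t \ge c$, and the partial-summation integral converges to $O(1)$ after substituting $u = c\log t/\sqrt{|\Delta|}$. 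This threshold $\exp(\sqrt{|\Delta|})$ --- calibrated to the exceptional-zero bound rather than to $h_K$ --- is precisely the paper's choice, and with it your argument goes through.
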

\begin{proof} By the quadratic class number formula, $h_K \asymp L(1,\chi)\sqrt{|\Delta|}$ \cite[eq. (15), p. 49]{Davenport}. Writing $L(1,\chi) = \prod_{p}(1-\chi(p)/p)^{-1}$ and rearranging, we see \eqref{eq:lowerprod} holds iff
\begin{equation}
\label{PAULEQ}
 \prod_{p > x} \left(1-\frac{\chi(p)}{p}\right) \ll \sqrt{|\Delta|},
\end{equation}
with an effective and absolute implied constant. By Mertens' Theorem \cite[Theorem 429, p. 466]{HW}, the factors on the left-hand side of (\ref{PAULEQ}) indexed by $p \le \exp(\sqrt{|\Delta|})$  make a contribution of $O(\sqrt{|\Delta|})$.  Put $y= \max\{x, \exp(\sqrt{|\Delta|})\}$; it suffices to show that $\prod_{p > y} \left(1-\chi(p)/p\right) \ll 1$. Taking logarithms, this will follow if we prove that $\sum_{p > y} \chi(p)/p=O(1)$. For $t \ge \exp(\sqrt{|\Delta|})$, the explicit formula gives $S(t):= \sum_{p \le t} \chi(p) \log{p} = -t^{\beta}/\beta + O(t/\log{t})$, where the main term is present only if $L(s,\chi)$ has a Siegel zero $\beta$. (C.f. \cite[eq. (8), p. 123]{Davenport}.) We will assume the Siegel zero exists; otherwise the argument is similar but simpler. By partial summation,
\begin{align*} \sum_{p>y} \frac{\chi(p)}{p} &= -\frac{S(y)}{y\log{y}} + \int_{y}^{\infty} \frac{S(t)}{t^2 (\log{t})^2} (1+\log{t})\, dt \\&\ll 1 + \int_{y}^{\infty} \frac{t^{\beta}}{t^2 \log{t}}\, dt.
\end{align*}
Haneke, Goldfeld--Schinzel, and Pintz have each shown that $\beta \le 1-c/\sqrt{|\Delta|}$, where the constant $c> 0$ is absolute and effective \cite{Haneke73,GS75,Pintz76}. Using this to bound $t^{\beta}$, and keeping in mind that $y \ge \exp(\sqrt{|\Delta|})$, we see that the final integral is at most
\[ \int_{\exp(\sqrt{|\Delta|})}^{\infty} \frac{\exp(-c\log{t}/\sqrt{|\Delta|})}{t\log{t}} \,dt. \]
A change of variables transforms the integral into $\int_{1}^{\infty} \exp(-cu) u^{-1}\,du$, which converges. Assembling our estimates completes the proof.
\end{proof}

\begin{proof}[Proof of Theorem \ref{thm:phibound}] Write $\varphi_K(\cc) = |\cc| \prod_{\pp \mid \cc} (1-1/|\pp|)$, and notice that the factors are increasing in $|\pp|$. So if $z\ge 2$ is such that $\prod_{|\pp| \le z} |\pp| \ge |\cc|$, then
\begin{equation}\label{eq:step0} \frac{\varphi_K(\cc)}{|\cc|} \ge \prod_{|\pp| \le z} \bigg(1-\frac{1}{|\pp|}\bigg). \end{equation}
We first establish a lower bound on the right-hand side, as a function of $z$, and then we prove the theorem by making a convenient choice of $z$.  We partition the prime ideals with $|\pp|\le z$ according to the splitting behavior of the rational prime $p$ lying below $\pp$. Noting that $p \le |\pp|$, Mertens' Theorem and Lemma \ref{lemma:lowerprod} yield
\begin{align} \prod_{|\pp| \le z} \bigg(1-\frac{1}{|\pp|}\bigg) &\ge \prod_{p \le z} \bigg(1-\frac{1}{p}\bigg) \bigg(1-\frac{\leg{\Delta}{p}}{p}\bigg) \notag\\
&\gg (\log{z})^{-1} \prod_{p \le z} \bigg(1-\frac{\leg{\Delta}{p}}{p}\bigg) \gg (\log{z})^{-1} \cdot h_K^{-1}. \label{eq:step1}\end{align}
With $C'$ a large absolute constant to be described momentarily, we set
\begin{equation}\label{eq:step2} z=(C'\log |\cc|)^2.\end{equation}
We must check that $\prod_{|\pp| \le z}|\pp| \ge |\cc|$.  The Prime Number Theorem implies \[ \prod_{|\pp| \le z}|\pp| \ge \prod_{p \le z^{1/2}} p \ge \prod_{p \le C'\log |\cc|}p \ge |\cc|,\] provided that $C'$ was chosen appropriately.  Combining \eqref{eq:step0}, \eqref{eq:step1}, and \eqref{eq:step2} gives \[\varphi_K(\cc)\gg |\cc| \cdot (\log{z})^{-1} \cdot h_K^{-1}\gg h_K^{-1} \cdot |\cc| \cdot \log(\log(|\cc|))^{-1}. \qedhere \]
%as desired.
\end{proof}

%Let $0 < \epsilon \leq \frac{1}{8}$, so $\frac{\log z}{z^{\frac{1}{2}-\epsilon}} < 1$
%for all $z \geq 1$.  By Theorems \ref{PARTTHREE} and \ref{SIEGELTHM} we have
%\[ \frac{ \#  E'(FK)[\tors] }{\log \log \#  E'(FK)[\tors] } \left(\log %|\Delta_K|\right)^{-1} \ll \varphi_K(\cc) \ll
%\frac{d}{h_K}  \ll d |\Delta_K|^{\epsilon-\frac{1}{2}}. \]
%It follows that \[\# E(F)[\tors] \ll
%\left(  \frac{d\log |\Delta_K|}{|\Delta_K|^{\frac{1}{2}-\epsilon}} \right) \max ( \log \log
%\left(  \frac{d\log |\Delta_K|}{|\Delta_K|^{\frac{1}{2}-\epsilon}} \right)
% ,1) \ll d \log \log d. \]

%  \left(\frac{d\log |\Delta_K|)}{|\Delta_K|^{\frac{1}{2}-\epsilon}} \right), 1). \]

%\[
% \#  E(F)[\tors]  \ll
%\begin{cases}
%   d \log \log d \ & \text{ if $d |\Delta_K|^{\epsilon-\frac{1}{2}} \log |\Delta_K| \leq %e$} \\

%\left(\frac{d\log |\Delta_K|)}{|\Delta_K|^{\frac{1}{2}-\epsilon}} \right)

% (\frac{d\log |\Delta_K|)}{|\Delta_K|^{\frac{1}{2}-\epsilon}}

% \log \log   \left(\frac{d\log |\Delta_K|)}{|\Delta_K|^{\frac{1}{2}-\epsilon}} \right)

%(d |\Delta_K|^{\epsilon-\frac{1}{2}} \log |\Delta_K|)

% & \text{ otherwise. }
%\end{cases} .\]

% (d |\Delta_K|^{\epsilon-\frac{1}{2}} \log |\Delta_K|) \log \log   (d %|\Delta_K|^{\epsilon-\frac{1}{2}} \log |\Delta_K|). \]

%This proves Theorem \ref{MAINTHM} and in fact a bit more: we have introduced a
%\emph{favorable} dependence on $|\Delta_K|$.

\subsection{Proof of Theorem \ref{MAINTHM}}
\noindent Let $F$ be a number field of degree $d \geq 3$, and let $E_{/F}$ be
a $K$-CM elliptic curve.  We may assume $\# E(F)[\tors] \geq 3$.  We have $E(FK)[\tors] \cong \Z/a\Z \times \Z/ab\Z$ for positive integers $a$ and $b$.   Theorem \ref{RAYCLASSTHM}
gives $FK \supset K^{(a \OO_K)}$.  Along with Lemma \ref{COMPOSITELEMMA} we get \[2d \geq [FK:\Q] \geq [K^{(a \OO_K)}:\Q] \geq \frac{h_K \varphi_K(a \OO_K)}{3}. \]
By
Theorem \ref{FIXITTHM}, there is an extension $L/FK$ with $(\Z/ab\Z)^2 \hookrightarrow E(L)$ and $[L:FK] \leq b$.  Applying Theorem \ref{RAYCLASSTHM} and Lemma \ref{COMPOSITELEMMA} as above we get
%Similarly, Theorem \ref{RAYCLASSTHM} gives
$L \supset K^{(ab \OO_K)}$ and
\[ [L:\Q] \geq [K^{(ab \OO_K)}:\Q] \geq  \frac{h_K \varphi_K(ab \OO_K)}{3}, \]
so
%\geq \frac{[FK:\Q]}{2} = \frac{[L:\Q]}{[L:FK]}
\begin{equation}
\label{FINALEQ0}
 d = [F:\Q]  \geq \frac{[FK:\Q]}{2} = \frac{[L:\Q]}{2[L:FK]} \geq \frac{[L:\Q]}{2b} \geq \frac{h_K \varphi_K(ab \OO_K)}{6b}.
\end{equation}
Multiplying (\ref{FINALEQ0}) through by $(ab)^2 = |ab \OO_K|$ and rearranging, we get
\begin{equation}
\label{FINALEQ1}
\# E(FK)[\tors] = a^2 b \leq  6\frac{ d}{h_K} \frac{|ab\OO_K|}{\varphi_K(ab\OO_K)}.
\end{equation}
By Theorem \ref{PARTTHREE} we have
\begin{equation}
\label{FINALEQ2}
 \frac{|ab \OO_K|}{\varphi_K(ab\OO_K)} \ll h_K \log \log |ab\OO_K|  \leq h_K \log \log (a^2 b)^2 \ll h_K \log \log \# E(FK)[\tors].
\end{equation}
Combining (\ref{FINALEQ1}) and (\ref{FINALEQ2}) gives
\[ \# E(FK)[\tors] \ll d \log \log \# E(FK)[\tors] \]
and thus
\[ \#E(F)[\tors] \leq \# E(FK)[\tors] \ll d \log \log d. \]

\ssection*{Acknowledgments} Thanks to John Voight for his encouragement and to Abbey Bourdon for pointing out an error in a previous draft. We are grateful to the referee for a careful reading of the manuscript.

\end{document}